\newtheorem*{theorem}{Theorem}
\newtheorem{lemma}{Lemma}
\newcommand{\Z}{\mathbb{Z}}
\DeclareMathOperator{\im}{im}
\author{Christopher Wulff}
\address{Instituto de Matem\'aticas (Unidad Cuernavaca), Universidad Nacional Aut\'o\-noma de M\'exico,
Avenida Universidad s/n, Colonia Lomas de Chamilpa, 62210 Cuernavaca, Morelos, Mexico}
\email{christopher.wulff@im.unam.mx}
\title[First $L^2$-Betti number of a fibration]{A geometric proof of Lück's vanishing theorem for the first $L^2$-Betti number of the total space of a fibration}
\thanks{Supported by the Program of Post-Doctoral
Scholarships at the Universidad Nacional Aut\'onoma de M\'exico.}
\begin{document}

\begin{abstract}
A significant theorem of Lück says that the first $L^2$-Betti number of the total space of a fibration vanishes under some conditions on the fundamental groups.
The proof is based on constructions on chain complexes.
In the present paper, we translate the proof into the world of CW-complexes to make it more accessible.
\end{abstract}

\maketitle

\section{Introduction}
In \cite{Lueck}, Lück proved the following significant theorem:

\begin{theorem}[{\cite[Theorem 3.1]{Lueck}}]
Let $F\xrightarrow{i} E\xrightarrow{p}B$ be a fibration of connected CW-complexes such that $F$ and $B$ have finite $2$-skeletons. Then $E$ has finite $2$-skeleton up to homotopy. If the image of $\pi_1(F)\to\pi_1(E)$ is infinite and $\pi_1(B)$ contains $\Z$ as a subgroup, then the first $L^2$-Betti number of $E$ vanishes:
$b_1(E)=0$.
\end{theorem}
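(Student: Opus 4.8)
The plan is to reduce the vanishing statement to a purely group-theoretic one and then prove that by a mapping-torus construction, which is the geometric counterpart of Lück's chain-level argument. First I note that attaching cells of dimension $\geq 3$ to a CW-complex changes neither $C_0,C_1,C_2$ of the universal cover nor $\partial_1,\partial_2$, hence not the reduced $L^2$-homology in degree $1$; together with the first assertion of the theorem (which I would deduce from Wall's finiteness criterion: $\pi_1(E)$ is finitely presented as an extension of finitely presented groups, and the Serre spectral sequence of the fibration, fed by the finite $2$-skeleta of $F$ and $B$, shows that $H_2$ of the universal cover of $E$ is finitely generated over $\Z\pi_1(E)$) this gives $b_1(E)=b_1(\pi_1(E))$. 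Write $G=\pi_1(E)$, $Q=\pi_1(B)$ and $N=\im(\pi_1(F)\to\pi_1(E))=\ker(G\twoheadrightarrow Q)$. Since $F$ has a finite $2$-skeleton, $\pi_1(F)$ is finitely presented, so its quotient $N$ is finitely generated; by hypothesis $N$ is infinite, and $Q$ contains an element $\bar t$ of infinite order. It therefore suffices to show: if $1\to N\to G\to Q\to1$ with $N$ finitely generated and infinite and $Q$ containing an infinite cyclic subgroup, then $b_1(G)=0$.

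For this I would use the fibration $K(N,1)\to K(G,1)\to K(Q,1)$ induced by the extension. Fix a lift $t\in G$ of $\bar t$ and put $G_0=q^{-1}(\langle\bar t\rangle)$. The covering of $K(G,1)$ belonging to $G_0$ is a $K(G_0,1)$ and is the pullback of the $\langle\bar t\rangle$-covering $Y_0\to K(Q,1)$; as a covering of an aspherical space with $\pi_1\cong\Z$, the space $Y_0$ is homotopy equivalent to $S^1$, so $K(G_0,1)$ is homotopy equivalent to the mapping torus $T_h$ of a self-homotopy-equivalence $h$ of $K(N,1)$. I claim $b_1(T_h)=0$. The cellular $\ell^2$-chain complex of $\widetilde{T_h}$ is the algebraic mapping cone of $\mathrm{id}-\Phi$ on $D_*=\ell^2 G_0\otimes_{\Z N}C_*(\widetilde{K(N,1)})$, where $\Phi$ is the chain automorphism built from a lift of $h$ together with right translation by $t$; the associated Wang sequence bounds $b_1(T_h)$ by $\dim_{\mathcal N G_0}\mathrm{coker}\bigl(\mathrm{id}-\Phi_*\mid H_1(D_*)\bigr)+\dim_{\mathcal N G_0}\ker\bigl(\mathrm{id}-\Phi_*\mid H_0(D_*)\bigr)$. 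The module $H_0(D_*)$ is the $0$-th $L^2$-homology of $K(N,1)$ with twisted coefficients and has von Neumann dimension $b_0(N)=1/|N|=0$ because $N$ is infinite, so the second summand dies. For the first, unwinding $\ell^2 G_0=\bigoplus_{k\in\Z}\ell^2 N\,t^k$ exhibits $H_1(D_*)$, as a module over $\mathcal N\langle t\rangle\cong L^\infty(S^1)$, as a direct integral $\int_{S^1}^{\oplus}\bar V\,dz$ of copies of $\bar V=H_1(\widetilde{K(N,1)};\ell^2 N)$ on which $\Phi_*$ acts by $z\mapsto\bar z\,\beta$ for a fixed bounded invertible $\mathcal N N$-endomorphism $\beta$ of $\bar V$. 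Hence $\dim_{\mathcal N G_0}\mathrm{coker}(\mathrm{id}-\Phi_*)=\int_{S^1}\dim_{\mathcal N N}\mathrm{coker}(z-\beta)\,\tfrac{dz}{2\pi}$, and the integrand is positive only for $z$ in the closure of the point spectrum of $\beta^{*}$; this set is countable, since eigenvectors of $\beta^{*}$ for distinct eigenvalues are linearly independent, so the von Neumann dimensions of the corresponding eigenspaces add up to at most $\dim_{\mathcal N N}\bar V=b_1(N)<\infty$ (using that $N$ is finitely generated). Thus the integral vanishes and $b_1(G_0)=0$.

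To pass from $G_0$ to $G$ I would run the Lyndon--Hochschild--Serre spectral sequence $H_p\bigl(Q;H_q(N;\mathcal N G)\bigr)\Rightarrow H_{p+q}(G;\mathcal N G)$. Only $E^2_{1,0}=H_1\bigl(Q;H_0(N;\mathcal N G)\bigr)$ and $E^2_{0,1}=H_0\bigl(Q;H_1(N;\mathcal N G)\bigr)$ contribute to $H_1(G;\mathcal N G)$ in dimension. Now $H_0(N;\mathcal N G)$ has von Neumann dimension $b_0(N)=0$, and since $Q=\pi_1(B)$ is finitely presented, hence of type $\mathrm{FP}_2$, the functors $H_p(Q;-)$ for $p\le1$ send dimension-zero modules to dimension-zero modules, so $E^2_{1,0}$ is negligible. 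And $E^2_{0,1}=H_1(N;\mathcal N G)_Q$ is a quotient of the $\langle\bar t\rangle$-coinvariants $\mathrm{coker}\bigl(\bar t-1\mid H_1(N;\mathcal N G)\bigr)$, on which $\bar t$ acts by the same $\Phi$-type operator; restricting scalars to $\mathcal N G_0$ and reusing the direct-integral argument shows this has dimension $0$. Therefore $b_1(G)=0$, whence $b_1(E)=0$.

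The step I expect to be the main obstacle is the vanishing in the second paragraph: first, getting the von Neumann-dimension bookkeeping exactly right (the algebraic mapping cone and its long exact sequence, direct integrals over $S^1$, the induction and restriction formulae for $\dim$, and the $\mathrm{FP}_2$-hypothesis on $Q$); and above all the ``generically invertible over $S^1$'' argument for $z-\beta$, which is precisely where the infinite order of $\bar t$ is used and which is the geometric substitute for Lück's manipulation of chain complexes.
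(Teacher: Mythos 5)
Your route is genuinely different from the paper's. The paper constructs a concrete CW-complex $T$ by gluing the mapping tori $T_{\sigma(s_1)},\dots,T_{\sigma(s_g)}$ of the fibre transports along a generating set of $\pi_1(B)$, shows that the assembled map $h\colon T\to E$ is $1$-connected, builds the $\Gamma$-covering $\widehat T$ explicitly, and then obtains $b_1(T,(h_*)^*\ell^2\Gamma)=0$ from one short exact sequence of $\Gamma$-chain complexes together with a \emph{citation} of Lück's mapping-torus theorem, \cite[Theorem 2.1]{Lueck}. You instead reduce to the purely group-theoretic statement (the equality $b_1(E)=b_1(\pi_1(E))$ is correct, since a $K(\pi_1(E),1)$ is obtained from $E$ by attaching cells of dimension $\ge3$, which leaves $\partial_1,\partial_2$ unchanged) and then run the Lyndon--Hochschild--Serre spectral sequence together with a Wang sequence for $G_0=N\rtimes\Z$. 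The LHS step uses the extended algebraic dimension theory over $\mathcal{N}G$ (flatness of $\mathcal{N}G$ over $\mathcal{N}N$, the $\mathrm{FP}_2$ criterion for preserving dimension-zero modules), which post-dates the 1994 paper; that is a legitimate but heavier toolbox than the paper uses.

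The genuine gap is in the Wang/direct-integral step, which is precisely where you propose to \emph{re-prove} what the paper merely cites. The identity
\[
\dim_{\mathcal{N}G_0}\mathrm{coker}(\mathrm{id}-\Phi_*)=\int_{S^1}\dim_{\mathcal{N}N}\mathrm{coker}(z-\beta)\,\tfrac{dz}{2\pi}
\]
is not a citable theorem, and as written it does not even parse: after Fourier-transforming $\ell^2 G_0\cong L^2(S^1,\ell^2 N)$, the fibrewise operator $\beta$ is only $\alpha$-twisted, i.e.\ $\beta\circ L_n=L_{\alpha(n)}\circ\beta$ with $\alpha$ the conjugation automorphism of $N$ induced by $t$. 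Hence $1-\bar z\beta$ is \emph{not} $\mathcal{N}N$-linear, its cokernel carries no canonical Hilbert $\mathcal{N}N$-module structure, and $\dim_{\mathcal{N}N}$ of the fibre is undefined without further repair (say by conjugating by the canonical unitary implementing $\alpha$). One would then still have to prove that the projection onto the cokernel is decomposable over $S^1$ and that its $\mathcal{N}G_0$-trace is the integral of the repaired fibrewise traces; none of this is supplied and it is genuinely nontrivial. The subsequent countability argument (eigenspaces of $\beta^*$ for distinct eigenvalues are independent, so by the Kaplansky parallelogram law their dimensions sum to at most $b_1(N)<\infty$, using that $N$ is finitely generated) is fine \emph{given} the integral formula. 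If you are willing to cite \cite[Theorem 2.1]{Lueck} instead of re-deriving it, your reduction plus the LHS spectral sequence would give a correct, machinery-heavier alternative proof; as it stands the central vanishing is not established.
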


Lück's proof is based on somewhat abstract constructions in the world of chain complexes, which make it quite hard to understand what really is going on geometrically.

On closer examination, however, it turns out that most of these constructions do have a counterpart already at the level of CW-complexes.
The purpose of the present paper is to elaborate these geometric counterparts and thereby translating Lück's proof into the world of CW-complexes.
The hope is that the geometric version of the proof is more accessible to the generic reader.

It should be said that the present paper is not meant to be considered independently of the original paper \cite{Lueck}.
In particular, we use without any recapitulation the same notation and assume that the reader is familiar with the basic results in \cite[Sections 1 \& 2]{Lueck}.

Furthermore, we only re-prove the original theorem shown above, but not any generalization such as \cite[Theorem 6.67]{LueckBuch} (although the proof of the latter theorem contains a geometric construction which exhibits a slight similarity to what we do here).
After all, the purpose of this paper is to simplify matters, not to complicate them.

The fact that our proof takes more space then the original proof in \cite{Lueck} is mainly due to the fact that we included a few more details.

\section{Outline of proof}

The idea of the proof is as follows. We will construct a more accessible CW-complex $T$ and a $1$-connected map $h:T\to E$ for which we can directly prove 
\[b_1(E)\leq b_1(T,(h_*)^*\ell^2(\pi_1(E)))=0\,.\]

In preparation of the proof we shall make the set-up precise. First of all, we assume that $E$ has finite $2$-skeleton and we can and will assume that all maps appearing (including loops defined on the unit interval $I:=[0,1]$ with the obvious cell structure) are cellular. And secondly, we choose $0$-cells $e\in E$ and $b:=p(e)\in B$ as basepoints and let $F:=p^{-1}(b)$ with basepoint $e\in F$.

Now, denote $\pi:=\pi_1(B,b)$, $\Gamma:=\pi_1(E,e)$ and $\Delta=\im(i_*:\pi_1(F,e)\to\Gamma)$. Thus, we obtain a group extension
\[1\to\Delta\to\Gamma\xrightarrow{p_*}\pi\to 1\,.\]

For each $w\in\pi$ we chose some arbitrary pre-image $\overline{w}\in\Gamma$ under $p_*$. We shall also use the same letters $w,\overline{w}$ for representing loops  $I\to B$, $I\to E$, respectively, and assume $w=p\circ \overline{w}$.

Choose a solution $h(w)$ to the lifting problem
\[\xymatrix@C=16ex{
F\times\{0\}\cup\{e\}\times I \ar[r]^-{i\cup\overline{w}} \ar[d]_{\text{incl.}}
&E\ar[d]^{p}
\\F\times I\ar[r]_-{w\circ \operatorname{pr}_I}\ar[ur]^{h(w)}
&B
}\]
and denote $\sigma(w):=h(\_,1):(F,e)\to (F,e)$.
The pointed homotopy class of $\sigma(w)$ is independent of the choices made and called the pointed fibre transport along $w$.
Denote by 
\[T_{\sigma(w)}:=F\times I/(x,1)\sim(\sigma(w)x,0)\]
the mapping torus of $\sigma(w)$.

We are now ready to define the CW-complex $T$. Choose a generating set $S=\{s_1,\dots, s_g\}$ of $\pi$ such that $s_1$ has infinite order and apply the constructions above to each $w\in S$. Then $T$ is obtained by gluing together $T_{\sigma(s_1)},\dots,T_{\sigma(s_g)}$ along the common subcomplex $F\times\{0\}$. It is obviously connected, because $F$ is connected.

All $h(s_1),\dots,h(s_g)$ together assemble to a map $h:T\to E$ which fits into the self-explaining commutative diagram
\[\xymatrix@C=16ex{
F\ar@{=}[d]\ar[r]^{\times\{0\}}
&T\ar[d]^h\ar[r]^{\text{proj.}}
&\bigvee_{n=1}^gS^1\ar[d]^{\bigvee_{n=1}^gs_n}
\\F\ar[r]^i
&E\ar[r]^p&B\,.
}\]
On fundamental groups, this induces
\[\xymatrix@C=16ex{
\pi_1(F,e)\ar@{->>}[d]^{i_*}\ar[r]
&\pi_1(T,(e,0))\ar[d]^{h_*}\ar@{->>}[r]
&\Z^{*g}\ar@{->>}[d]
\\\Delta\ar@{^{(}->}[r]
&\Gamma\ar@{->>}[r]^{p_*}&\pi
}\]
and exactness of the lower row together with the indicated surjectivity of some of the maps immediately implies that $h_*$ is surjective, too, and so $h$ is $1$-connected.

Denote by $\widetilde E\to E$ and $\widetilde T\to T$ the universal coverings and by $\widehat{T}\xrightarrow{t} T$ the connected covering of $T$ associated to the subgroup $\ker(h_*)$. Thus, the latter has deck transformation group $\Gamma$ and there is a $\Gamma$-equivariant lift $\widehat{h}:\widehat{T}\to \widetilde E$ of $h$.
We obtain a $1$-connected $\Z\Gamma$-chain map of free $\Z\Gamma$-chain complexes
\[\Z\Gamma\otimes_{\Z\Gamma}C_*(\widetilde T)\cong C_*(\widehat{T})\xrightarrow{\widehat{h}_*}C_*(\widetilde E)\]
and the proof of \cite[Lemma 1.2.1]{Lueck} implies
\[b_1(E)\leq b_1(T,(h_*)^*\ell^2\Gamma)\,.\]

In the following section, we shall provide a more concrete construction of $\widehat{T}$ which allows us to calculate the right hand side of this inequality directly in the final section.

\section{Explicit construction of $\widehat{T}$}

Denote by $f:\overline{F}\to F$ the connected covering corresponding to the subgroup $\ker(i_*)$, which has $\Delta$ as deck transformation group. 
Choose any $0$-cell $\overline{e}\in\overline{F}$ with $f(\overline{e})=e$ as basepoint.

For arbitrary $w$, the map $h(w):F\times I\to E$ is a homotopy between $i\circ\sigma(w)$ and $i$, which implies
\[\im((\sigma(w)\circ f)_*)=(\sigma(w))_*(\ker(i_*))=\ker(i_*)=\im(f_*)\]
and thus $\sigma(w)$ lifts to a map $\overline{\sigma}(w):\overline{F}\to\overline{F}$ which fixes $\overline{e}$.
This map is not $\Delta$-equivariant, but:
\begin{lemma}\label{lem:noncommuting}
For arbitrary $\delta\in\Delta$ we have
\[\overline{\sigma}(w)\circ\delta= \underbrace{\overline{w}^{-1}\delta\overline{w} }_{\in\Delta}\,\circ\,\overline{\sigma}(w)\,.\]
\end{lemma}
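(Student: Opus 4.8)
\section*{Proof proposal for Lemma~\ref{lem:noncommuting}}

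The plan is to recognise both sides of the claimed identity as lifts, through $f:\overline{F}\to F$, of one and the same map $\sigma(w)\circ f:\overline{F}\to F$, and then to invoke uniqueness of lifts. First one checks that $\overline{w}^{-1}\delta\overline{w}$ really lies in $\Delta$ — this is just the normality of $\Delta=\ker(p_*)$ in $\Gamma$, or equivalently $p_*(\overline{w}^{-1}\delta\overline{w})=w^{-1}\cdot 1\cdot w=1$ — so it acts on $\overline{F}$ by a genuine deck transformation. Since deck transformations cover $\operatorname{id}_F$ and $f\circ\overline{\sigma}(w)=\sigma(w)\circ f$ by definition of the lift, both $\overline{\sigma}(w)\circ\delta$ and $(\overline{w}^{-1}\delta\overline{w})\circ\overline{\sigma}(w)$ cover $\sigma(w)\circ f$. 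As $\overline{F}$ is connected, two such lifts are equal as soon as they agree at the single point $\overline{e}$, so the whole lemma reduces to comparing the two images of $\overline{e}$.

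To evaluate at $\overline{e}$ I would use the path-lifting description of deck transformations, i.e.\ the standard identification of $\Delta$ with $\operatorname{Deck}(\overline{F}/F)$ coming from $i_*:\pi_1(F,e)\twoheadrightarrow\Delta$ descending through $\pi_1(F,e)/\ker(i_*)$. Write $\delta=i_*[\alpha]$ for a loop $\alpha:(I,\partial I)\to(F,e)$; then the deck transformation $\delta$ sends $\overline{e}$ to the endpoint $\widetilde{\alpha}(1)$ of the lift $\widetilde{\alpha}$ of $\alpha$ starting at $\overline{e}$. Because $\overline{\sigma}(w)$ fixes $\overline{e}$ and covers $\sigma(w)\circ f$, the path $\overline{\sigma}(w)\circ\widetilde{\alpha}$ is precisely the lift of $\sigma(w)\circ\alpha$ starting at $\overline{e}$; hence the left-hand side sends $\overline{e}$ to $\widetilde{\sigma(w)\circ\alpha}(1)$, which is the image of $\overline{e}$ under the deck transformation indexed by $i_*[\sigma(w)\circ\alpha]=(i\circ\sigma(w))_*[\alpha]$. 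The right-hand side sends $\overline{e}$ to $(\overline{w}^{-1}\delta\overline{w})\bigl(\overline{\sigma}(w)(\overline{e})\bigr)=(\overline{w}^{-1}\delta\overline{w})(\overline{e})$, i.e.\ to the image of $\overline{e}$ under the deck transformation indexed by $\overline{w}^{-1}\delta\overline{w}=\overline{w}^{-1}i_*[\alpha]\overline{w}$. So everything comes down to the identity $(i\circ\sigma(w))_*[\alpha]=\overline{w}^{-1}\,i_*[\alpha]\,\overline{w}$ in $\Gamma$, for every $[\alpha]\in\pi_1(F,e)$.

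This last identity is exactly what the homotopy $h(w)$ supplies. By construction $h(w)$ is a homotopy from $i$ to $i\circ\sigma(w)$ whose track at the basepoint, $t\mapsto h(w)(e,t)$, is the loop $\overline{w}$. Evaluating $h(w)\circ(\alpha\times\operatorname{id}_I)$ on $I\times I$ and reading off its boundary (the usual square argument) gives $[(i\circ\alpha)*\overline{w}]=[\overline{w}*(i\circ\sigma(w)\circ\alpha)]$ in $\pi_1(E,e)$, that is $i_*[\alpha]\cdot\overline{w}=\overline{w}\cdot(i\circ\sigma(w))_*[\alpha]$, which rearranges to the required formula. Combining the three steps finishes the proof.

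The only point that genuinely needs care — and where one is most likely to slip — is the bookkeeping of conventions: one must fix once and for all the concatenation order used in $\pi_1$, the resulting left action of $\Delta$ on $\overline{F}$ and its identification with $\pi_1(F,e)/\ker(i_*)$ via $i_*$, and the direction of the change-of-basepoint isomorphism produced by $h(w)$; an inconsistent choice lands one on $\overline{w}\delta\overline{w}^{-1}$ instead of $\overline{w}^{-1}\delta\overline{w}$. Everything else is routine covering-space machinery.
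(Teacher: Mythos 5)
Your proposal is correct and follows essentially the same route as the paper's proof: reduce to the single point $\overline{e}$ by uniqueness of lifts of $\sigma(w)\circ f$ on the connected space $\overline{F}$, compute both sides at $\overline{e}$ via path lifting, and reduce the comparison to the conjugation identity $(i\circ\sigma(w))_*[\alpha]=\overline{w}^{-1}\,i_*[\alpha]\,\overline{w}$ in $\Gamma$, which is extracted from the homotopy $h(w)$. You are more explicit than the paper in two places — spelling out that $\overline{w}^{-1}\delta\overline{w}\in\Delta$ follows from $\Delta=\ker(p_*)$ being normal, and unwinding the homotopy into the standard square argument rather than simply asserting that $h(w)$ yields a homotopy between the two loops — but these are elaborations of the same argument, not a different one.
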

\begin{proof}
Note that both sides are lifts $\overline{F}\to\overline{F}$ of the map $\sigma(w):F\to F$. It therefore suffices to prove the equality at the point $\overline{e}$, i.\,e.\ that $\overline{\sigma}(w)(\delta\cdot\overline{e})= (\overline{w}^{-1}\delta\overline{w})\cdot \overline{e}$.

Denote a representative loop $I\to F\subset E$ of $\delta$ by the same letter and let $\overline{\delta}$ be a lift of $\delta$ to $\overline{F}$ with $\overline{\delta}(0)=\overline{e}$. Then $\delta\cdot \overline{e}$ is defined as $\overline{\delta}(1)$.

With this data at hand, the point $\overline{\sigma}(w)(\delta\cdot\overline{e})$ is defined as $\alpha(1)$,  where $\alpha:I\to \overline{F}$ is the lift of the loop $\sigma(w)\circ\delta$ with starting point $\alpha(0)=\overline{e}$. In other words, the action of $\sigma(w)\circ\delta\in\Delta$ takes $\overline{e}$ to $\overline{\sigma}(w)(\delta\cdot\overline{e})$.

But $\sigma(w)\circ\delta= \overline{w}^{-1}\delta\overline{w}$ in $\Gamma$ and therefore also in $\Delta$, because $h(w)$ gives rise to a homotopy in $E$ between those loops. This proves the claim.
\end{proof}

Denote by $\widehat{F}:=\Gamma\times\overline{F}/\Delta$ the $\Gamma$-CW-complex obtained from $\Gamma\times\overline{F}$ by dividing out the equivalence relation $(\gamma,x)\sim(\gamma\delta^{-1},\delta x)$. The $\Gamma$-action is the obvious left action on the first component.
Lemma \ref{lem:noncommuting} now implies, that the $\Gamma$-equivariant map
\[\widehat{\sigma}(w):[(\gamma,x)]\mapsto [(\gamma\overline{w},\overline{\sigma}(w)x)]\]
is well-defined.

Denote by $T_{\widehat{\sigma}(w)}=\widehat{F}\times I/\sim$ the mapping torus of $\widehat{\sigma}(w)$. 
In this section, we define $\widehat{T}$
 by gluing together the $T_{\widehat{\sigma}(s_1)},\dots,T_{\widehat{\sigma}(s_g)}$ along the common $\widehat{F}\times\{0\}$ and claim that it is exactly the covering described in the previous section.

First of all, note that $\widehat{T}$ is indeed a covering of $T$ with each of the subcomplexes $T_{\widehat{\sigma}(s_n)}$ covering the corresponding subcomplex  $T_{\sigma(s_n)}$, and clearly, the canonical $\Gamma$-action on $\widehat{T}$ coming from the action on $\widehat{F}$ is by deck transformations.

\begin{lemma}
The space $\widehat{T}$ is connected.
\end{lemma}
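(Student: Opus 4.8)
The plan is to analyse the set of connected components of $\widehat{F}$ and to track how the gluings in the definition of $\widehat{T}$ merge them.

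First I would observe that, since $\overline{F}$ is connected, the connected components of $\widehat{F}=\Gamma\times\overline{F}/\Delta$ are exactly the images $\widehat{F}_v$ of the sets $\{\gamma\}\times\overline{F}$ as $\gamma$ ranges over a fixed coset $v\in\Gamma/\Delta$; two such coset-indexed pieces are disjoint because $[(\gamma_1,x_1)]=[(\gamma_2,x_2)]$ forces $\gamma_2\in\gamma_1\Delta$. Since $p_*$ identifies $\Gamma/\Delta$ with $\pi$, this yields a bijection $\pi_0(\widehat{F})\cong\pi$, $\widehat{F}_v\leftrightarrow v$ (and, incidentally, the $\Gamma$-action on $\pi_0(\widehat{F})$ becomes left translation via $p_*$). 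Next, from the formula $\widehat{\sigma}(w)([(\gamma,x)])=[(\gamma\overline{w},\overline{\sigma}(w)x)]$ one reads off that $\widehat{\sigma}(w)$ maps $\widehat{F}_v$ into $\widehat{F}_{vw}$; in other words, on $\pi_0(\widehat{F})\cong\pi$ the map $\widehat{\sigma}(w)$ acts as right translation by $w$.

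I would then use this to control the mapping tori. In $T_{\widehat{\sigma}(s_n)}=\widehat{F}\times I/(x,1)\sim(\widehat{\sigma}(s_n)x,0)$ the image of $\widehat{F}_v\times I$ is connected (as a continuous image of a connected set) and joins $\widehat{F}_v\times\{0\}$ to a nonempty subset of $\widehat{F}_{vs_n}\times\{0\}$, hence to all of the connected set $\widehat{F}_{vs_n}\times\{0\}$, via the identification at $I$-coordinate $1$. Since $\widehat{T}$ is obtained by gluing $T_{\widehat{\sigma}(s_1)},\dots,T_{\widehat{\sigma}(s_g)}$ along the common $\widehat{F}\times\{0\}$, it follows that $\widehat{F}_v\times\{0\}$ and $\widehat{F}_{v'}\times\{0\}$ lie in the same path component of $\widehat{T}$ whenever $v'$ is obtained from $v$ by right multiplication with a word in $s_1^{\pm1},\dots,s_g^{\pm1}$. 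As $S=\{s_1,\dots,s_g\}$ generates $\pi$, every $v'\in\pi$ arises this way, so all of the sets $\widehat{F}_v\times\{0\}$ lie in one path component of $\widehat{T}$. Finally, every point of $\widehat{T}$ lies on the image of some $\widehat{F}_v\times I$ inside some $T_{\widehat{\sigma}(s_n)}$ and is therefore connected to $\widehat{F}_v\times\{0\}$; hence $\widehat{T}$ is path-connected.

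I do not anticipate a serious obstacle: the only point needing a little care is the bookkeeping in the first step — correctly identifying the components of the twisted quotient $\Gamma\times\overline{F}/\Delta$ and keeping the left $\Gamma$-action apart from the right "$\widehat{\sigma}$-translation". Once $\pi_0(\widehat{F})$ and the action of the $\widehat{\sigma}(s_n)$ on it are understood, connectedness of $\widehat{T}$ reduces to the mere fact that $S$ generates $\pi$.
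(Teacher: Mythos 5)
Your proof is correct and its underlying mechanism is the same as the paper's: the mapping-torus structure of $T_{\widehat{\sigma}(s_n)}$ gives paths moving the ``$\gamma$-slice'' of $\widehat{F}$ by right multiplication with $s_n$, and generation of $\pi$ by $S$ (together with connectedness of $\overline{F}$) then ties everything together. The packaging differs slightly: the paper fixes the fiber $t^{-1}\{(e,0)\}$, writes down explicit paths $\widehat{s}_{n,\gamma}\colon r\mapsto[(\gamma,\overline{e},r)]$ from $[(\gamma,\overline{e},0)]$ to $[(\gamma\overline{s_n},\overline{e},0)]$, and invokes the covering $\widehat{T}\to T$ to reduce to connecting these lattice points, whereas you work at the level of $\pi_0(\widehat{F})\cong\Gamma/\Delta\cong\pi$ and observe that $\widehat{\sigma}(w)$ induces right translation by $w$ there. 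Your version is a bit more self-contained (the final step doesn't appeal to covering space theory) and the $\pi_0$-bookkeeping is clean; the paper's version has the advantage that the explicit paths $\widehat{s}_{n,\gamma}$ are reused verbatim in the proof of the subsequent lemma identifying $\widehat{T}$ with the covering associated to $\ker(h_*)$, so the extra explicitness is not wasted.
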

\begin{proof}
Note that it clearly suffices to show that the points of 
\[t^{-1}\{(e,0)\}=\{[(\gamma,\overline{e},0)]\,|\,\gamma\in\Gamma\}\]
can be connected by paths in $\widehat{T}$.

For each $n=1,\dots,g$ and $\gamma\in\Gamma$, the path
\[\widehat{s}_{n,\gamma}:\,I\to T_{\widehat{\sigma}(s_n)}\subset \widehat{T}\,,\quad r\mapsto [(\gamma,\overline{e},r)]\]
connects 
$[(\gamma,\overline{e},0)]$ with \[[(\gamma,\overline{e},1)]=[(\gamma \overline{s_n},\overline{\sigma}(s_n)\overline{e},0)]=[(\gamma \overline{s_n},\overline{e},0)]\]
and is mapped to $\overline{s_n}$, $s_n$ under $h\circ t$ and $p\circ h\circ t$, respectively.

By applying this repeatedly, we see that each $[(\gamma,\overline{e},0)]$ is connected to $[(\delta,\overline{e},0)]=[(1,\delta\overline{e},0)]$ for some $\delta\in \ker(p_*)=\Delta$, and this is in turn is connected to $[(1,\overline{e},0)]$, because $\overline{F}$ is path connected.
\end{proof}

\begin{lemma}
If $\tau:I\to\widehat{T}$ is a path connecting $[(\gamma',\overline{e},0)]$ to 
$[(\gamma'\gamma,\overline{e},0)]$, then 
$h\circ t$ maps $\tau$ to a representative loop $I\to E$ of $\gamma$.
\end{lemma}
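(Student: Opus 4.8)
The plan is to reduce to the case $\gamma'=1$ by a deck transformation, lift the loop $h\circ t\circ\tau$ to the universal covering $q\colon\widetilde E\to E$, and read off its class from where the lift ends. For the reduction, note that applying the deck transformation $\gamma'^{-1}$ to $\widehat T$ carries $\tau$ to a path $\gamma'^{-1}\cdot\tau$ running from $[(1,\overline e,0)]$ to $[(\gamma,\overline e,0)]$, and since $t$ is invariant under deck transformations we have $h\circ t\circ(\gamma'^{-1}\cdot\tau)=h\circ t\circ\tau$; so we may assume $\tau$ runs from $[(1,\overline e,0)]$ to $[(\gamma,\overline e,0)]$. Also $h\circ t$ sends both endpoints to $e$ --- because $t$ collapses the whole fibre $\{[(\gamma'',\overline e,0)]\mid\gamma''\in\Gamma\}$ to $(e,0)$ and $h(e,0)=e$ --- so $h\circ t\circ\tau$ really is a loop based at $e$ and represents some element of $\Gamma=\pi_1(E,e)$.

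Next I would fix a $\Gamma$-equivariant lift $\widehat h\colon\widehat T\to\widetilde E$ of $h\circ t$ (so $q\circ\widehat h=h\circ t$; cf.\ the outline) and normalise it so that $\widehat h([(1,\overline e,0)])=\widetilde e$ for the chosen lift $\widetilde e\in\widetilde E$ of $e$; this is legitimate since $q(\widehat h([(1,\overline e,0)]))=h(e,0)=e$. For the concrete model of $\widehat T$ of this section, such a $\widehat h$ is produced by lifting the homotopies $h(s_1),\dots,h(s_g)$ through $q$ on the individual mapping tori $T_{\widehat\sigma(s_1)},\dots,T_{\widehat\sigma(s_g)}$, using the twisting relation of Lemma \ref{lem:noncommuting} to arrange $\Gamma$-equivariance and compatibility on the common subcomplex $\widehat F\times\{0\}$. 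Since the $\Gamma$-action on $\widehat T$ is left translation in the $\Gamma$-coordinate of $\widehat F$, so that $[(\gamma,\overline e,0)]=\gamma\cdot[(1,\overline e,0)]$, equivariance gives $\widehat h([(\gamma,\overline e,0)])=\gamma\cdot\widetilde e$. Hence $\widehat h\circ\tau$ is a path in $\widetilde E$ from $\widetilde e$ to $\gamma\cdot\widetilde e$ lying over $h\circ t\circ\tau$, and by uniqueness of path lifting it is \emph{the} lift of the loop $h\circ t\circ\tau$ starting at $\widetilde e$.

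It then remains to translate back. Under the identification of $\pi_1(E,e)$ with the deck transformation group $\Gamma$ of $q$ that is in force throughout, a loop based at $e$ represents $g\in\Gamma$ precisely when its lift starting at $\widetilde e$ terminates at $g\cdot\widetilde e$; by the previous paragraph this lift terminates at $\gamma\cdot\widetilde e$, hence $h\circ t\circ\tau$ represents $\gamma$, which is the claim. (One may double-check the direction of this dictionary against the connectivity lemma: for $\tau=\widehat s_{n,1}$, with endpoints $[(1,\overline e,0)]$ and $[(\overline{s_n},\overline e,0)]$, the recipe returns $\overline{s_n}$, matching the computation made there.)

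The routine points are the reduction to $\gamma'=1$, the observation that $h\circ t\circ\tau$ is a loop, and the purely formal $\pi_1$--deck-group bookkeeping. The one genuinely substantial step is producing the $\Gamma$-equivariant lift $\widehat h$ for the concrete $\widehat T$ of this section --- equivalently, verifying that $h\circ t$ kills $\pi_1(\widehat T,[(1,\overline e,0)])$, i.e.\ that $t\colon\widehat T\to T$ really is the covering classified by $\ker(h_*)$. This is exactly where the gluing data of $\widehat T$, and in particular the non-commuting relation of Lemma \ref{lem:noncommuting} that made $\widehat\sigma(w)$ well defined, is used; if one is content to cite the identification of $\widehat T$ with the covering from the outline, then $\widehat h$ is available at once and the lemma follows from the last two paragraphs alone.
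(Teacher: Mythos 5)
Your argument takes a genuinely different route from the paper's. The paper never leaves $\widehat T$: it uses cellular approximation to write $\tau$ (up to homotopy rel endpoints, which does not change $h\circ t\circ\tau$ up to homotopy of loops) as a concatenation of constant-speed paths along $1$-cells, verifies the claim by hand for the two generating types — paths inside $\widehat F\times\{0\}$ and the paths $\widehat s_{n,\gamma'}$ — and then uses a "trading" homotopy $\rho_{n,c}\cdot(\sigma(s_n)\circ\alpha)\simeq\alpha\cdot\widehat s_{n,\gamma}$ to reduce general $1$-cells to those two types. You instead pass to the universal cover $\widetilde E$, produce a normalised $\Gamma$-equivariant lift $\widehat h$ of $h\circ t$, and read off the class of $h\circ t\circ\tau$ from the endpoint $\widehat h(\tau(1))=\gamma\cdot\widetilde e$ via the deck-group dictionary. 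This is conceptually cleaner and avoids the cellular-approximation bookkeeping, but it shifts all the weight onto the construction of $\widehat h$ on the \emph{concrete} model of $\widehat T$ built in this section.

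Two remarks on that shift. First, you flag the right danger: the alternative you offer — simply citing the identification of $\widehat T$ with the covering classified by $\ker(h_*)$ from the outline — is circular here, because the paper constructs $\widehat T$ concretely and the present lemma (together with the connectivity lemma) is precisely what \emph{establishes} that identification; so that option is not available. Second, the direct construction you sketch does work and does break the circle, but you leave it as a sketch. To spell it out: lift $i\circ f:\overline F\to E$ to $\lambda:\overline F\to\widetilde E$ with $\lambda(\overline e)=\widetilde e$ (possible since $(i\circ f)_*\pi_1(\overline F)=i_*\ker(i_*)=1$); check that $\lambda$ is $\Delta$-equivariant by uniqueness of lifts; set $\widehat h([(\gamma,x)])=\gamma\cdot\lambda(x)$ on $\widehat F$; lift each homotopy $h(s_n)\circ(f\times\mathrm{id})$ to $\widetilde H_n:\overline F\times I\to\widetilde E$ with $\widetilde H_n(\_,0)=\lambda$, observe $\widetilde H_n(\overline e,1)=\overline{s_n}\cdot\widetilde e$ hence $\widetilde H_n(\_,1)=\overline{s_n}\cdot\lambda\circ\overline\sigma(s_n)$, and check that $[(\gamma,x)],r\mapsto\gamma\widetilde H_n(x,r)$ is well defined on $\widehat F\times I$ and descends through the identification $(x,1)\sim(\widehat\sigma(s_n)x,0)$. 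That verification is the real content of your proof, and it is comparable in length to the paper's own reduction-to-generators argument; without it the proof is not complete.
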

\begin{proof}
We have already seen this for $\tau$ being one of the paths $\widehat{s}_{n,\gamma'}$ defined in the proof of the previous lemma. It is also clear for $\tau$ a path within $\widehat{F}\times\{0\}$, because any such path is of the form $r\mapsto [(\gamma',\tau'(r),0)]$ with $\tau'$ a path in $\overline{F}$ satisfying $\tau'(0)=\overline{e}$ and $(\gamma'\gamma,\overline{e})\sim (\gamma',\tau'(1))$, which implies $\gamma\in\Delta$ and $\tau'(1)=\gamma\overline{e}$.

The set of all paths which satisfy the claim is clearly closed under concatenation and taking reversed paths. It is thus sufficient to show that any path $\tau$ satisfying the prerequisites of the lemma can be homotoped into a concatenation of the $\widehat{s}_{n,\gamma'}$ and their inverses and paths within $\widehat{F}\times\{0\}$.

By cellular approximation and a subsequent homotopy within the parameter space $I$, any such $\tau$ can be written as a concatenation of finitely many paths $\tau_1,\dots,\tau_k$, each of which is a constant speed path along a $1$-cell of $\widehat{T}$.
These are either contained in $\widehat{F}\times\{0\}$ or run along a $1$-cell of the form $c\times I\subset T_{\widehat{\sigma}(s_n)}\subset \widehat{T}$ 
with $c$ being a $0$-cell of $\widehat{F}$.
Denote the paths along the latter in positive direction by $\rho_{n,c}$. 
Note that for $c=[(\gamma,\overline{e})]$ we recover the path $\widehat{s}_{n,\gamma}$.

Any $c$ which is not of this form can be connected to some $[(\gamma,\overline{e})]$ by a path $\alpha:I\to\widehat{F}$ and an obvious homotopy in $\widehat{T}$ shows
\[\rho_{n,c}\cdot(\sigma(s_n)\circ\alpha)\simeq \alpha\cdot\widehat{s}_{n,\gamma}\,.\]

This allows us to trade any of the $\tau_m$ which is equal to some $\rho_{n,c}$ (or its inverse) for a concatenation of two paths in $\widehat{F}$ and one of the $\widehat{s}_{n,\gamma}$ (or its inverse) in between.
This shows the claim.
\end{proof}

The last two Lemmas imply that $\widehat{T}$ is exactly the covering associated to $\ker(h_*)$: it is connected and if $\tau:I\to\widehat{T}$ maps to a loop in $T$ based at $e$, then $\tau$ itself is a loop if and only if $h_*[t\circ\tau]=0$.

Furthermore, the last lemma shows that the two canonical actions of $\Gamma$ on $\widehat{T}$ as deck transformations, the action coming from general covering theory and the action induced by the $\Gamma$-action on $\widehat{F}$, are in fact the same.

\section{Calculating $b_1(T,(h_*)^*\ell^2\Gamma)=0$}

The proof of the theorem is completed by calculating $b_1(T,(h_*)^*\ell^2\Gamma)=0$.

Note that $\widehat{T}\setminus T_{\widehat{\sigma}(s_1)}=\coprod_{n=2}^g\widehat{F}\times(0,1)$ and we therefore obtain a short exact sequence of $\Gamma$-chain-complexes
\[0\to C_*(T_{\widehat{\sigma}(s_1)})\to C_*(\widehat{T})\to \bigoplus_{n=2}^g C_{*-1}(\widehat{F})\to 0\,.\]
This induces by \cite[Thm. 2.1 on p.10]{CheegerGromov} a weakly exact $L^2$-homology sequence
\begin{align*}
H_1(\ell^2\Gamma\otimes_{\Z\Gamma}C_*(T_{\widehat{\sigma}(s_1)}))
&\to H_1(\ell^2\Gamma\otimes_{\Z\Gamma}C_*(\widehat{T}))
\to \bigoplus_{n=2}^g H_0(C_*(\ell^2\Gamma\otimes_{\Z\Gamma}\widehat{F}))\,.
\end{align*}

On the right hand side, the van Neumann dimension of the summands is $b_0(F,(i_*)^*\ell^2\Gamma)$, which vanishes by \cite[Lemma 1.2.5]{Lueck} as $\im(i_*)=\Delta$ is infinite.

The van Neumann dimension of the left hand side is  $b_1(T_{\sigma(s_1)},(\phi_*)^*\ell^2\Gamma)$, where $\phi:T_{\sigma(s_1)}=F\times I/{\sim}\to E$ is a quotient of $h(s_1)$.
Let $\Gamma'\subset\Gamma$ be the image of $\phi_*$, which is exactly the subgroup of $\Gamma$ generated by $\Delta$ and $\overline{s_1}$. As $s_1\in B$ has infinite order, the canonical map $\pi_1(T_{\sigma(s_1)},e)\to\Z$ factors as $\pi_1(T_{\sigma(s_1)},e)\xrightarrow{\phi'}\Gamma'\to\Z$.
Using \cite[Lemma 1.2.3 and Theorem 2.1]{Lueck} we conclude 
\[b_1(T_{\sigma(s_1)},(\phi_*)^*\ell^2\Gamma)=b_1(T_{\sigma(s_1)},(\phi')^*\ell^2\Gamma')=0\,.\]

Thus, the weakly exact sequence implies that the van Neumann dimension of the middle term $H_1(\ell^2\Gamma\otimes_{\Z\Gamma}C_*(\widehat{T}))$, which is exactly $b_1(T,(h_*)^*\ell^2\Gamma)$, vanishes as well and the proof of the theorem is complete.

\bibliographystyle{plain}

\end{document}